\newtheorem{theorem}{Theorem}[section]
\newtheorem{usl}{Condition}
\begin{document}
\title{Asymptotic distribution of certain degenerate V- and U-statistics with estimated parameters}

\author{Marija Cupari\'c\thanks{marijar@matf.bg.ac.rs} ,  Bojana Milo\v sevi\'c\thanks{bojana@matf.bg.ac.rs} , Marko Obradovi\' c \thanks{marcone@matf.bg.ac.rs}
         \\\medskip
{\small Faculty of Mathematics, University of Belgrade, Studenski trg 16, Belgrade, Serbia}}

\date{}

\maketitle

\begin{abstract}
The asymptotic distribution of a wide class of V- and U-statistics with estimated parameters is derived in the case when the kernel is not necessarily differentiable along the parameter. The results have their application in  goodness-of-fit problems. 
\end{abstract}

{\small \textbf{ keywords:} degenerate V-statistics, characterization, goodness-of-fit}
%

\textbf{MSC(2010):} 62E20,  62G20

\maketitle

\section{Introduction}

V- and U-statistics frequently appear in inferential procedures as estimators and test statistics. In many cases they depend on nuisance parameters, and, in order to keep the
inference as broad as possible, estimators of such parameters are included in the statistic.

Although some earlier papers (see e.g. \cite{sukhatme1958testing,raghavachari1965two,gupta1967asymptotically}) consider some particular cases, 
the first general study devoted to U-statistics with estimated parameters was done in \cite{randles1982asymptotic}, where the effect 
of estimating parameters in asymptotically normal (non-degenerate) U-statistics was examined. De Wet and Randles 
\cite{de1987effect} studied a special case of degenerate V-statistics of order 2 obtained as an integrated square of a 
V-statistic of order 1. They proved that, under some regularity conditions, the limiting distribution 
is certain infinite linear combination of $\chi^2_1$ random variables. Their method was subsequently used for examining particular goodness-of-fit tests in several papers (see e.g. \cite{jimenez2003bootstrapping,wei2016weighted,henze2020test}).


Here we extend the result from \cite{de1987effect} by considering the case when the V-statistic of order 1
is replaced with a V-statistic of an arbitrary order. The practical value of this result is reflected in the fact that 
a class of characterization based test statistics are of this form. This class includes $L^2$-type tests employing V-empirical Laplace transforms of equidistributed random variables (see  \cite{cuparic2018new,revista}), but it is not limited to them. In fact, instead of  V-empirical Laplace transforms one could employ  V-empirical distribution functions (as e.g. in \cite{IMJournal}), characteristic functions, densities, etc. 
Exploring the asymptotics of such tests is closely related to V- and U-empirical functions with estimated parameters. Additionally, it enables calculation of the approximate Bahadur efficiency, one of important tools for test comparison (see e.g. \cite{cuparic}). The exceptional performance of the tests from \cite{cuparic2018new} set the motivation to generalize the result and hence facilitate further research on this topic. 

In addition, we present an analogous result for U-statistics. This result, using a different method of proof and slightly more general form, is also obtained in \cite[Theorem 3]{arcones2007two}.


Let $X_1,\ldots, X_n$ be a sequence of i.i.d. random variables with distribution function F  that depends on an unknown parameter $\lambda$. A V-statistic of order $m$
with estimated parameter, where $\widehat{\lambda}_n$ is a consistent estimator of $\lambda$, is 
\begin{align}\label{Vocenjeno}
     V_{n}(\widehat{\lambda}_n)=\frac{1}{n^{m}}\sum_{i_1,...,i_{m}=1}^n\Phi(X_{i_1},...,X_{i_{m}};\widehat{\lambda}_n).
\end{align}
If its kernel $\Phi(x_1,...,x_m;\lambda)$ satisfies that
$\varphi_1(x;\lambda)=0,$ for all $x$, and 
   $E_{\lambda}\varphi^2_2(X_1,X_2;\lambda)>0$,
where $\varphi_1(x)=E_{\lambda}(\Phi(x,X_2,...,X_m;\lambda))$ and $\varphi_2(x,y;\lambda)=E_{\lambda}(\Phi(x,y,X_3,...,X_m;\lambda))$ are the first and second projection of the kernel $\Phi$,
then \eqref{Vocenjeno} is a V-statistic with a weakly degenerate kernel.

If the parameter $\lambda$ is known, then the asymptotic distribution is (see e.g. \cite{serfling2009approximation})
\begin{align}\label{raspodelaPoznato}
 nV_n(\lambda)\overset{D}{\to}\binom{2m}{2}\sum_{k=1}^{+\infty}\upsilon_kZ_{k}^2,
\end{align}
where $\{\upsilon_k\}, k=1,2,...,$ is the sequence of eigenvalues of the operator $A$ defined on $L^2(\mathbb{R},F)$ as
\begin{equation}\label{operatorA}
    Aq(x)=\int_{R}\varphi_2(x,y)q(y)dF(y),
\end{equation}
and $Z_{k},k=1,2,...,$ are i.i.d random variables with standard normal distribution.

In \cite{de1987effect} the following special case was considered
\begin{align*}
 V_n(\widehat{\lambda})=\int_{-\infty}^\infty \Big(\frac1n \sum_{i=1}^n g(X_i,t;\widehat{\lambda}_{  n})\Big)^2 dM(t),
\end{align*}
where $g$ is a function satisfying some regularity conditions and $M(t)$ is a finite measure. The kernel of 
$V_n(\widehat{\lambda})$ is then
\begin{align*}
 \Phi(x_1,x_2,\widehat{\lambda})=\int_{-\infty}^\infty g(x_1,t;\widehat{\lambda}_{  n})g(x_2,t;\widehat{\lambda}_{  n})dM(t).
\end{align*}

\section{Main results}

Generalizing the idea from \cite{de1987effect}, we consider the  V-statistics of order $2m$ with the following 
symmetrized kernel
\begin{align}\label{jezgro}
        \Phi(x_{1},...,x_{{2m}};\widehat{\lambda}_n)&=\frac{1}{(2m)!}\sum\limits_{\pi\in\Pi(2m)}\int\limits_{-\infty}^{+\infty} 
        g(x_{\pi(1)},...,x_{\pi(m)},t;\widehat{\lambda}_n) g(x_{\pi({m+1})},...,x_{\pi({2m})},t;\widehat{\lambda}_n)dM(t),
\end{align}
where $\Pi(2m)$ is the set of all permutations of $\{1,2,...,2m\}$ and $M(t)$ is a finite measure. 
Without loss of generality, we assume that the function $g$ is a symmetric function of its first $m$ arguments.

Next we state some regularity conditions imposed on the smoothness of the  function $g$ and the rate of convergence  of the estimator $\widehat{\lambda}_n$.

\begin{usl}\label{uslov1}
Suppose that $\mu(t;\gamma)=E_\lambda(g(X_1,...,X_m,t;\gamma))$ exists for all $\gamma$ in the  neighbourhood of $\lambda$
and it satisfies $\mu(t;\lambda)\equiv 0$ for each $t$. 
Suppose additionally that for all $\varepsilon>0$ there exists a  ball $S$ with finite radius in $\mathbb{R}^p$ centered at $\lambda$, such that
for $\gamma\in S$ it holds
\begin{equation*}
    \frac{1}{||\gamma-\lambda||^2}\int\limits_{-\infty}^{+\infty}(\mu(t;\gamma)-d_1\mu(t;\lambda)^T(\gamma-\lambda))^2dM(t)<\varepsilon,
\end{equation*}
where $d_1\mu(t;\!\lambda)$ is the vector of partial derivatives 
of $\mu(t;\gamma)$ at $\gamma=\lambda$, 
satisfying  
\begin{equation*}
    \int\limits_{-\infty}^{+\infty}(d_1\mu(t;\lambda)_r)^2dM(t)<\infty,
\end{equation*}
for $r=1,...,p$ where $d_1\mu(\cdot)_r$ is the $r-$th component of the vector $d_1\mu(\cdot)$.
\end{usl}
  
\begin{usl}\label{uslov2}
  Suppose  
  \begin{align}
   \widehat{\lambda}_n=\lambda+\frac{1}{n}\displaystyle\sum_{i=1}^n\alpha(X_i)+o_p\Big(\frac{1}{\sqrt{n}}\Big),\; n\to\infty,
  \end{align}
  where $E(\alpha(X_i)_r)=0$ and $E(\alpha(X_i)_r\alpha(X_i)_{r'})<\infty$ for all $1\leq r\leq r'\leq p$.
\end{usl} 

\begin{usl}\label{uslov3}
  For any variation $(i_1,...,i_m)$ of indices from $\{1,2,....,n\}$, there exists a neighbourhood $K(\lambda)$ of $\lambda$ and a constant $C>0$, such that
  for $\gamma\in K(\lambda)$ and for any ball $D(\gamma,d)$ centered at $\gamma$ with radius $d$ such that $D(\gamma,d)\subset K(\lambda)$, it holds
    \begin{align*}
         \int\limits_{-\infty}^{+\infty}(E(\sup_{\gamma'\in D(\gamma, d)}|g(X_{i_1},...,X_{i_m},t;\gamma')-g(X_{i_1},...,X_{i_m},t;\gamma)|))^2dM(t)\leq Cd^2.
      \end{align*}
     
    In addition, for every $\varepsilon>0$ there exists $d^*>0$ such that for $0<d<d^*$ holds
      \begin{align*}
         \int\limits_{-\infty}^{+\infty}E(\sup_{\gamma'\in D(\gamma, d)}|g(X_{i_1},...,X_{i_m},t;\gamma')-g(X_{i_1},...,X_{i_m},t;\gamma)|)^4dM(t)<\varepsilon.
      \end{align*}
\end{usl}

Following \cite{de1987effect} we define a new statistic, that depends on the true value of the parameter
\begin{equation}\label{Vpom}
\begin{split}
    V^*_n(\lambda)&=\frac{1}{n^{2m}}\sum_{i_1,...,i_{2m}}\Phi_*(X_{i_1},...,X_{i_{2m}};\lambda)\\&=\frac{1}{n^{2m}}\sum_{i_1,...,i_{2m}}\int\limits_{-\infty}^{+\infty}\bigg(g(X_{i_1},...,X_{i_m},t;\lambda)+d_1\mu(t;\lambda)^T\frac{1}{m}\sum_{i_j\in\{i_1,...,i_m\}}\alpha(X_{i_j})\bigg)
   \\&\times\bigg(g(X_{i_{m+1}},...,X_{i_{2m}},t;\lambda)+d_1\mu(t;\lambda)^T\frac{1}{m}\sum_{i_k\in\{i_{m+1},..,i_{2m}\}}\alpha(X_{i_k})\bigg)dM(t),
    \end{split}
\end{equation}
where the function $\alpha(\cdot)$ is defined in Condition \ref{uslov2}. Let $\varphi^*_1(x;\lambda)$ and $\varphi^*_2(x,y;\lambda)$ be the first and second projection of the symmetrized version of the kernel $\Phi_*$. Using Conditions 1 and 2 it can be easily shown that $\varphi^*_1(x;\lambda)=0.$
    
Moreover,
\begin{align}\label{fi2}
    &\varphi^*_2(x_1,x_{m+1};\lambda)=\frac{2m^2(2m-2)!}{(2m)!} E(\Phi_*(x_1,X_2,...,X_{m},x_{m+1},X_{m+2}...,X_{2m};\lambda))\nonumber\\&=
   \frac{2m^2(2m-2)!}{(2m)!}\int\limits_{-\infty}^{+\infty}\bigg(g_1(x_1,t;\lambda)+d_1\mu(t;\lambda)^T\frac{1}{m}\alpha(x_{1})\bigg)
   \bigg(g_1(x_{m+1},t;\lambda)+d_1\mu(t;\lambda)^T\frac{1}{m}\alpha(x_{m+1})\bigg)dM(t),
\end{align}
where $g_1(x,t;\lambda)=E_\lambda(g(x,X_{2},...,X_{m},t;{\lambda}))$.
Since $V^*_n(\lambda)$ is a weakly degenerate V-statistic, it holds 
\begin{equation}\label{raspodelaV*}
    nV^*_n(\lambda)\stackrel{d}{\rightarrow}\binom{2m}{2}\sum_{k=1}^{+\infty}\upsilon^*_kZ_{k}^2,
\end{equation}
where $\{\upsilon^*_k\}$ is the sequence of eigenvalues of  integral operator $A^*$ defined with
\begin{equation}\label{operatorA*}
    A^*q(x)=\int_{R}\varphi_2^*(x,y;\lambda)q(y)dF(y),
\end{equation}
and $\{Z_k\}$ is the sequence of i.i.d. random variables with standard normal distribution.
The following theorem gives the asymptotic distribution of a V-statistic with kernel \eqref{jezgro}.
\begin{theorem}\label{raspodela} 
Suppose Conditions \ref{uslov1}-\ref{uslov3} are satisfied and, additionally, that it holds $E(\Phi^2_*(\!X_1,\!...,X_{2m}; \lambda))\!<\!+\!\infty$ and 
$E(\Phi_*(X_{i_1},...,X_{i_{2m}}; \lambda))<+\infty,$ for all variations $i_1,\ldots,i_m$, $1\leq i_1,...,i_{2m}\leq n$. 
Then 
\begin{align}
 n(V_n(\widehat{\lambda}_n)-V^*_n(\lambda))\stackrel{P}{\rightarrow}0
\end{align}
 Moreover, if $E(\varphi_2^*(X_1,X_2;\lambda))^2>0$, then
\begin{equation*}
    nV_n(\widehat{\lambda}_n)\stackrel{D}{\rightarrow}\binom{2m}{2}\sum_{k=1}^{+\infty}\upsilon_k^*Z_{k}^2,
\end{equation*}
where $\{\upsilon_k^*\}$ is the sequence of eigenvalues of the operator $A^*$ defined in \eqref{operatorA*}
and
$\{Z_{k}\}$ 
is the sequence of i.i.d. random variables with standard normal distribution.
\end{theorem}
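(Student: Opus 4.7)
The plan is to rewrite both $V_n(\widehat{\lambda}_n)$ and $V^*_n(\lambda)$ in a factorised form and to show their difference is $o_p(1/n)$. Because $g$ is symmetric in its first $m$ arguments and $\Phi$ is averaged over all $(2m)!$ permutations, relabelling indices in \eqref{Vocenjeno} (with $m$ replaced by $2m$) makes all permutations inside $\Phi$ contribute equally, and the sum over $(i_1,\ldots,i_{2m})\in\{1,\ldots,n\}^{2m}$ factorises, giving
\begin{equation*}
V_n(\widehat{\lambda}_n)=\int_{-\infty}^{+\infty} U_n(t;\widehat{\lambda}_n)^{2}\,dM(t),\qquad U_n(t;\gamma):=\frac{1}{n^m}\sum_{i_1,\ldots,i_m=1}^n g(X_{i_1},\ldots,X_{i_m},t;\gamma).
\end{equation*}
The same manipulation on \eqref{Vpom}, together with the identity $\frac{1}{n^m}\sum_{i_1,\ldots,i_m}\frac{1}{m}\sum_{j=1}^m\alpha(X_{i_j})=\bar\alpha:=\frac{1}{n}\sum_{i=1}^n\alpha(X_i)$, yields
\begin{equation*}
V^*_n(\lambda)=\int_{-\infty}^{+\infty}\bigl(U_n(t;\lambda)+d_1\mu(t;\lambda)^{T}\bar\alpha\bigr)^2\,dM(t).
\end{equation*}

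Applying $a^2-b^2=(a-b)(a+b)$ and Cauchy--Schwarz in $L^2(dM)$ gives
\begin{equation*}
|V_n(\widehat{\lambda}_n)-V^*_n(\lambda)|\le \|R_n\|_{L^2(dM)}\,\|S_n\|_{L^2(dM)},
\end{equation*}
where $R_n(t):=U_n(t;\widehat{\lambda}_n)-U_n(t;\lambda)-d_1\mu(t;\lambda)^{T}\bar\alpha$ and $S_n(t):=U_n(t;\widehat{\lambda}_n)+U_n(t;\lambda)+d_1\mu(t;\lambda)^{T}\bar\alpha$. It therefore suffices to prove $\|R_n\|_{L^2(dM)}=o_p(n^{-1/2})$ and $\|S_n\|_{L^2(dM)}=O_p(n^{-1/2})$. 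For $R_n$ I would use the three-term decomposition
\begin{equation*}
R_n(t)=\bigl([U_n-\mu](t;\widehat{\lambda}_n)-[U_n-\mu](t;\lambda)\bigr)+\bigl(\mu(t;\widehat{\lambda}_n)-d_1\mu(t;\lambda)^{T}(\widehat{\lambda}_n-\lambda)\bigr)+d_1\mu(t;\lambda)^{T}(\widehat{\lambda}_n-\lambda-\bar\alpha).
\end{equation*}
The second summand is $o_p(\|\widehat\lambda_n-\lambda\|)=o_p(n^{-1/2})$ in $L^2(dM)$ by Condition~\ref{uslov1} applied at $\gamma=\widehat\lambda_n$ with $\|\widehat\lambda_n-\lambda\|=O_p(n^{-1/2})$; the third is $o_p(n^{-1/2})$ in $L^2(dM)$ by Condition~\ref{uslov2} combined with the square-integrability of each component of $d_1\mu(\cdot;\lambda)$. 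The bound on $\|S_n\|$ is easier: $\mu(\cdot;\lambda)\equiv 0$ makes $U_n(\cdot;\lambda)$ a centred degenerate V-process with $E\!\int\! U_n(t;\lambda)^{2}\,dM(t)=O(1/n)$, and $\bar\alpha=O_p(n^{-1/2})$ by Condition~\ref{uslov2}, while $U_n(\cdot;\widehat{\lambda}_n)$ is handled by the same localization argument used for the first summand above.

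The technical heart of the argument, and the main obstacle, is the first summand of $R_n$, namely the centred empirical V-process increment $[U_n-\mu](t;\widehat{\lambda}_n)-[U_n-\mu](t;\lambda)$, which must be shown to be $o_p(n^{-1/2})$ in $L^2(dM)$ despite the random argument $\widehat\lambda_n$. I would localize on $\{\|\widehat{\lambda}_n-\lambda\|\le d_n\}$ with $d_n\downarrow 0$ sufficiently slowly (this event has probability $\to 1$), cover the ball $D(\lambda,d_n)$ by finitely many sub-balls, and apply the moment bounds of Condition~\ref{uslov3}: the $Cd^{2}$ estimate for the squared $L^1$ supremum controls the V-statistic bias over each sub-ball, while the $\varepsilon$-estimate for the fourth moment of the supremum, combined with Markov's inequality and the standard variance bound $\operatorname{Var}(U_n(t;\gamma)-\mu(t;\gamma))=O(1/n)$ for degenerate V-statistics, controls the fluctuation. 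Summing over the finite cover and taking $d_n$ small enough yields the required $o_p(n^{-1/2})$ rate. Putting all bounds together gives $n(V_n(\widehat{\lambda}_n)-V^*_n(\lambda))\stackrel{P}{\to}0$, and the distributional conclusion follows immediately from \eqref{raspodelaV*} and Slutsky's lemma.
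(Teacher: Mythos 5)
Your plan is essentially the paper's own proof: the paper introduces an intermediate statistic $Y_n=\int\big(\frac{1}{n^m}\sum g(X_{i_1},\ldots,X_{i_m},t;\lambda)+\mu(t;\widehat{\lambda}_n)\big)^2dM(t)$ whose comparison with $V^*_n(\lambda)$ absorbs exactly your second and third summands of $R_n$ (via Conditions \ref{uslov1} and \ref{uslov2}), and whose comparison with $V_n(\widehat{\lambda}_n)$ is exactly your centred V-process increment, handled by the same Sukhatme-type finite covering argument combined with the two moment bounds of Condition \ref{uslov3} and a Chebyshev/covariance-partition estimate. The one point to tighten is your localization: to keep the cover finite the paper works with a fixed ball $S$ for the rescaled quantity $\sqrt{n}(\widehat{\lambda}_n-\lambda)$ (i.e.\ a ball of radius $O(n^{-1/2})$ around $\lambda$, with the exceptional probability made arbitrarily small rather than tending to one), whereas a radius $d_n$ with $\sqrt{n}\,d_n\to\infty$ would force the number of covering sub-balls of the size required by Condition \ref{uslov3} to grow with $n$.
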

\begin{proof}
The idea is to show the equidistribution of $nV_n(\widehat{\lambda}_{n})$ and $nV^*_n(\lambda)$, from where, taking into account \eqref{raspodelaV*}, will follow the statement of the theorem.

Without loss of generality we assume  $\int_{-\infty}^\infty dM(t)=1$.
Define an auxiliary statistic $Y_n$ as 
\begin{equation*}
    Y_n=\int\limits_{-\infty}^{+\infty}\bigg(\frac{1}{n^{m}}\sum_{i_1,...,i_m}g(X_{i_1},...,X_{i_m},t;\lambda)+\mu(t;\widehat{\lambda}_n)\bigg)^2dM(t).
\end{equation*}
It is sort of midway from $V^*_n(\lambda)$ to $V_n(\widehat{\lambda}_n)$, depending both on the true value $\lambda$ and its estimator $\widehat{\lambda}_n$.
We show that $nY_n-nV^*_n(\lambda)\stackrel{P}{\to}0$ and $nV_n(\widehat{\lambda}_n)-nY_n\stackrel{P}{\to}0$. 

Consider first the difference between $Y_n$  and $V^*_n(\lambda)$. Then, using the identity $a^2-b^2=(a-b)^2+2b(a-b)$ we get
\begin{align*}
       &Y_n-V^*_n(\lambda)
       =       
       \int\limits_{-\infty}^{+\infty}\bigg(\mu(t;\widehat{\lambda}_n)-d_1\mu(t;\lambda)^T\frac{1}{n}\sum_{k=1}^n\alpha(X_{k})\bigg)^2dM(t)\\&+\!2\int\limits_{-\infty}^{+\infty}\!\bigg(\frac{1}{n^{m}}\sum_{i_1,...,i_m}\bigg(g(X_{i_1},...,X_{i_m},t;\lambda)+d_1\mu(t;\lambda)^T\frac{1}{m}\sum_{i_k\in\{i_1,...,i_m\}}\alpha(X_{i_k})\bigg)\!\bigg)\!\bigg(\!\mu(t;\widehat{\lambda}_n)\!-\!d_1\mu(t;\lambda)^T\frac{1}{n}\sum_{k=1}^n\alpha(X_{k})\bigg)dM(t).
\end{align*}

Therefore, using the Cauchy-Schwarz inequality, we obtain
\begin{equation}\label{Y-V}
   \begin{split}
   n(Y_n-V^*_n(\lambda))&\leq n\int\limits_{-\infty}^{+\infty}\bigg(\mu(t;\widehat{\lambda}_n)-d_1\mu(t;\lambda)^T\frac{1}{n}\sum_{k=1}^n\alpha(X_{k})\bigg)^2dM(t)
  \\&+2\bigg(\!\int\limits_{-\infty}^{+\infty}n\bigg(\!\frac{1}{n^{m}}\sum_{i_1,...,i_m}g(X_{i_1},...,X_{i_m},t;\lambda)\!+\!d_1\mu(t;\lambda)^T\frac{1}{n}\!\sum_{k=1}^n\!\alpha(X_{k})\!\bigg)^2\!dM(t)\!\bigg)^\frac{1}{2}
  \\&\times\bigg(\int\limits_{-\infty}^{+\infty}n\bigg(\mu(t;\widehat{\lambda}_n)-d_1\mu(t;\lambda)^T\frac{1}{n}\sum_{k=1}^n\alpha(X_{k})\bigg)^2dM(t)\bigg)^\frac{1}{2}
   \\&= n\int\limits_{-\infty}^{+\infty}\bigg(\mu(t;\widehat{\lambda}_n)-d_1\mu(t;\lambda)^T\frac{1}{n}\sum_{k=1}^n\alpha(X_{k})\bigg)^2dM(t)
   \\&+2\sqrt{nV^{*}_n(\lambda)}\bigg(\int\limits_{-\infty}^{+\infty}n\bigg(\mu(t;\widehat{\lambda}_n)-d_1\mu(t;\lambda)^T\frac{1}{n}\sum_{k=1}^n\alpha(X_{k})\bigg)^2dM(t)\bigg)^\frac{1}{2}.
   \end{split}
\end{equation}

From the Condition \ref{uslov1} it follows that
\begin{equation}\label{prviClan}
   \begin{split}
       n\int\limits_{-\infty}^{+\infty}\bigg(\mu(t;\widehat{\lambda}_n)-d_1\mu(t;\lambda)^T(\widehat{\lambda}_n-\lambda)\bigg)^2dM(t)<\varepsilon||\sqrt{n}(\widehat{\lambda}_n-\lambda)||^2.
    \end{split}
\end{equation}
Since, due to Condition \ref{uslov2}, $\sqrt{n}(\widehat{\lambda}_n-\lambda)$ is bounded in probability, the first summand of \eqref{Y-V} tends to zero in probability.
{The second summand also tends to zero due to the Slutsky theorem and the fact that $nV^{*}_n(\lambda)$ for V-statistics with kernels satisfying the conditions
of the theorem are bounded in probability.}

Consider now the difference between $V_n(\widehat{\lambda}_n)$ i $Y_n$. Analogously to the previous case we get
\begin{align*}
        n(V_n(\widehat{\lambda}_n)-Y_n)\!&\leq\!
        \int\limits_{-\infty}^{+\infty}n\bigg(\frac{1}{n^{m}}\!\sum_{i_1,...,i_m}\!\Big(g(X_{i_1},...,X_{i_m},t;\widehat{\lambda}_n)\!-\!g(X_{i_1},...,X_{i_m},t;\lambda)\!\Big)-\mu(t;\widehat{\lambda}_n)\bigg)^2dM(t)\\&
        +2\sqrt{nY_n}\bigg(\int\limits_{-\infty}^{+\infty}n\bigg(\frac{1}{n^{m}}\sum_{i_1,...,i_m}\Big(g(X_{i_1},...,X_{i_m},t;\widehat{\lambda}_n)-g(X_{i_1},...,X_{i_m},t;\lambda\Big)-\mu(t;\widehat{\lambda}_n)\bigg)^2dM(t)\bigg)^\frac12.
\end{align*}

Since $nY_n$ is bounded in probability, it suffices to prove that 
\begin{align*}
\int\limits_{-\infty}^{+\infty}\!n\bigg(\!\frac{1}{n^{m}}\!\sum_{i_1,...,i_m}\!\Big(g(X_{i_1},...,X_{i_m},t;\widehat{\lambda}_n)\!-\!g(X_{i_1},...,X_{i_m},t;\lambda{  )}\Big)\!-\!\mu(t;\widehat{\lambda}_n)\!\bigg)^2\!dM(t)
\end{align*}
converges to zero in probability.
Define 
\begin{equation*}
\begin{split}
    Q_n(s,{r})\!=\!\!\int\limits_{-\infty}^{+\infty}\!\bigg(\!\frac{1}{n^{m}}\!\sum_{i_1,...,i_m}\!\Big(\!g\Big(X_{i_1},...,X_{i_m},t;\lambda\!+\!\frac{s}{\sqrt{n}}\!\Big)\!-\!g\!\Big(X_{i_1},...,X_{i_m},t;{\lambda\!+\!\frac{r}{\sqrt{n}}}\!\Big)\!-\!\mu\!\Big(\!t;\lambda\!+\!\frac{s}{\sqrt{n}}\Big)\!+\! \mu\Big(t;\lambda\!+\!\frac{r}{\sqrt{n}}\!\Big)\!\Big)\!\bigg)^2\!dM(t).
\end{split}
\end{equation*}
We need to show that $nQ_n(\sqrt{n}(\widehat{\lambda}_n-\lambda),{  0})\stackrel{P}{\rightarrow}0.$
Condition \ref{uslov2} ensures the existence of a  ball $S$ in $\mathbb{R}^p$ such that
\begin{equation*}
    P\{\sqrt{n}(\widehat{\lambda}_n-\lambda)\notin S\}\to 0,\;\text{as } n\to \infty.
\end{equation*}
Then for $\widetilde{\eta}>0$ it holds
\begin{equation*}
\begin{split}
    P\{nQ_n(\sqrt{n}(\widehat{\lambda}_n-\lambda),{  0})\geq\widetilde{\eta}\}
    \leq P\{\sup\limits_{s\in S}nQ_n(s,{  0})\geq \widetilde{\eta}\}+P\{\sqrt{n}(\widehat{\lambda}_n-\lambda)\notin S\}.
\end{split}
\end{equation*} 
Hence it is enough to prove that $\sup\limits_{s\in S}nQ_n(s,{  0})\stackrel{P}{\rightarrow}0$. 
Following \cite{sukhatme1958testing} (see also \cite{iverson1989effects}), let $\delta>0$ and let $\{S_i\}$, $i=1,\ldots,i_{\max}$, be a collection of balls, centered at $r_i$ with radius $\delta$,
such that for all $s\in S$  there exists at least one $S_i$ such that $s\in S_i$.  The existence of $\{S_i\}$ is ensured 
by the finiteness of $S$.

Then we have
\begin{align*}
 P\{\sup_{s\in S} nQ_n(s,0)\geq \widetilde{\eta}\}\!\leq\! P\Big\{\bigcup_{i=1}^{i_{\max}}\{\sup_{s\in S_i} nQ_n(s,0)\geq \widetilde{\eta}\}\Big\}\!\leq\! 
 \sum_{i=1}^{i_{\max}} P\{\sup_{s\in S_i} nQ_n(s,0)\geq \widetilde{\eta}\}.
\end{align*}

Since the sum is finite, it is enough to prove that $P\{\sup_{s\in S_i} nQ_n(s,0)\geq \widetilde{\eta}\}\to 0$  for all $i$
as $n\to\infty$. 
Consider the following decomposition
\begin{align*}
Q_n(s,0)&=\int\limits_{-\infty}^{+\infty}\bigg(\frac{1}{n^{m}}\sum_{i_1,...,i_m}\Big(g\Big(X_{i_1},...,X_{i_m},t;\lambda+\frac{s}{\sqrt{n}}\Big)-g\Big(X_{i_1},...,X_{i_m},t;\lambda\Big)
-\mu\Big(t;\lambda+\frac{s}{\sqrt{n}}\Big)+\mu(t;\lambda)\Big)\bigg)^2dM(t)
\\&=\int\limits_{-\infty}^{+\infty}\bigg(\frac{1}{n^{m}}\sum_{i_1,...,i_m}\!\Big(g\Big(X_{i_1},...,X_{i_m},t;\lambda+\frac{s}{\sqrt{n}}\Big)\!-\!
g\Big(X_{i_1},...,X_{i_m},t;\lambda+\frac{r_i}{\sqrt{n}}\Big)
-\mu\Big(t;\lambda+\frac{s}{\sqrt{n}}\Big)+\mu\Big(t;\lambda+\frac{r_i}{\sqrt{n}}\Big)
\\&+g\Big(X_{i_1},...,X_{i_m},t;\lambda+\frac{r_i}{\sqrt{n}}\Big)
-g\Big(X_{i_1},...,X_{i_m},t;\lambda\Big)-\mu\Big(t;\lambda+\frac{r_i}{\sqrt{n}}\Big)
+\mu(t;\lambda)\Big)\bigg)^2dM(t)
\\&= Q_n(s,r_i)+Q_n(r_i,0)+2\widetilde{Q_n}(s,r_i,0),
\end{align*}
where 
\begin{align}\label{mesoviti}
\begin{aligned}
 \widetilde{Q_n}(s,r_i,0)&\!=\!\int\limits_{-\infty}^{+\infty}\!\bigg(\!\frac{1}{n^{2m}}\!\sum_{i_1,...,i_m}\!\Big(g\Big(X_{i_1},...,X_{i_m},t;\lambda\!+\!\frac{s}{\sqrt{n}}\Big)\!-\!
g\Big(X_{i_1},...,X_{i_{m}},t;\lambda\!+\!\frac{r_i}{\sqrt{n}}\Big)
-\mu\Big(t;\lambda+\frac{s}{\sqrt{n}}\Big)+\mu\Big(t;\lambda+\frac{r_i}{\sqrt{n}}\Big)
\Big)\\&\times\Big(g\Big(X_{i_{m+1}},...,X_{i_{2m}},t;\lambda+\frac{r_i}{\sqrt{n}}\Big)
-g\Big(X_{i_{m+1}},...,X_{i_{2m}},t;\lambda\Big)-\mu\Big(t;\lambda+\frac{r_i}{\sqrt{n}}\Big)
+\mu(t;\lambda)\Big)\bigg)dM(t).
\end{aligned}
\end{align}
{Then the next inequality holds}
\begin{align}\label{dekompozicija}
 \sup_{s\in S_i} nQ_n(s,0)\leq \sup_{s\in S_i} nQ_n(s,r_i) + nQ_n(r_i,0)+ 2\sup_{s\in S_i} \Big|n\widetilde{Q_n}(s,r_i,0)\Big|,
\end{align}
and we shall prove that each summand is smaller than $\eta=\frac{\widetilde{\eta}}{3}$ with probability one when $n\to\infty$. 

Define
\begin{align}\label{Hfunkcija}
\begin{aligned}
	H(x_1,...,&x_{2m};S_i)=\sup_{ s\in S_i}\int\limits_{-\infty}^{+\infty}\bigg|\Big(g\Big(x_1,...,x_m,t;\lambda+\frac{s}{\sqrt{n}}\Big)
	-\mu\Big(t;\lambda+\frac{s}{\sqrt{n}}\Big)
	-g\Big(x_1,...,x_m,t;\lambda +\frac{r_i}{\sqrt{n}}\Big)+
	\mu\Big(t;\lambda+\frac{r_i}{\sqrt{n}}\Big)\Big)\\&\times
\Big(g\Big(x_{m+1},...,x_{2m},t;\lambda+\frac{s}{\sqrt{n}}\Big)
	-\mu\Big(t;\lambda+\frac{s}{\sqrt{n}}\Big)-g\Big(x_{m+1},...,x_{2m},t;\lambda +\frac{r_i}{\sqrt{n}}\Big)
	+\mu\Big(t;\lambda+\frac{r_i}{\sqrt{n}}\Big)\Big)\bigg|dM(t)
\end{aligned}
\end{align}
Then the first summand on the right hand side of \eqref{dekompozicija} can be bounded by
\begin{align}\nonumber
 \sup_{s\in S_i} nQ_n(s,r_i)&\leq \frac{n}{n^{2m}}\sum_{i_1,\ldots,i_{2m}}H(X_{i_1},...,X_{i_{2m}};S_i)\\&
 \begin{aligned}\label{dekompSupremuma}& =\frac{n}{n^{2m}}\sum_{i_1,\ldots,i_{2m}}\big(H(X_{i_1},...,X_{i_{2m}};S_i)-EH(X_{i_1},...,X_{i_{2m}};S_i)\big)
 + \frac{n}{n^{2m}}\sum_{i_1,\ldots,i_{2m}}EH(X_{i_1},...,X_{i_{2m}};S_i)).
 \end{aligned}
\end{align}

We intend to choose $\delta$ such that the second summand of  \eqref{dekompSupremuma} is less than $\frac\eta2$
and the first one is less than $\frac\eta2$ with probability one when $n\to\infty$.

Consider the second summand of \eqref{dekompSupremuma}. Partition the set of $2m$-tuples into 
$\mathcal{I}_{c;2m}$, $c=0,1,\ldots,2m-1$, such that in $\mathcal{I}_{c;2m}$ there are $2m-c$ distinct entries. When $i_1,\ldots,i_{2m}\in\mathcal{I}_{0;2m},$ repeated application of the Cauchy-Schwarz inequality  gives 
\begin{align*}
EH&(X_1,...,X_{2m};S_i)
 \!\leq\! \int\limits_{-\infty}^{+\infty}\!E\bigg(\!\sup_{ s\in S_i}\!\bigg|\!\Big(g\Big(X_1,...,X_m,t;\lambda\!+\!\frac{s}{\sqrt{n}}\!\Big)
	\!-\!\mu\Big(t;\lambda\!+\!\frac{s}{\sqrt{n}}\!\Big)
	\!-\!g\Big(X_1,...,X_m,t;\lambda \!+\!\frac{r_i}{\sqrt{n}}\Big)\!+\!
	\mu\Big(t;\lambda\!+\!\frac{r_i}{\sqrt{n}}\Big)\!\Big)\\&\times
	\Big(g\Big(X_{m+1},...,X_{2m},t;\lambda+\frac{s}{\sqrt{n}}\Big)
	-\mu\Big(t;\lambda+\frac{s}{\sqrt{n}}\Big)-g\Big(X_{m+1},...,X_{2m},t;\lambda +\frac{r_i}{\sqrt{n}}\Big)
	+\mu\Big(t;\lambda+\frac{r_i}{\sqrt{n}}\Big)\Big)\bigg|\Bigg)dM(t)
\\&\leq \int\limits_{-\infty}^{+\infty}E\bigg(\sup_{ s\in S_i}\Big|\Big(g\Big(X_1,...,X_m,t;\lambda+\frac{s}{\sqrt{n}}\Big)
	-\mu\Big(t;\lambda+\frac{s}{\sqrt{n}}\Big)
	-g\Big(X_1,...,X_m,t;\lambda \!+\!\frac{r_i}{\sqrt{n}}\Big)\!+\!
	\mu\Big(t;\lambda+\frac{r_i}{\sqrt{n}}\Big)\Big|
	\\&\times \sup_{ s\in S_i}\Big|g\Big(X_{m+1},...,X_{2m},t;\lambda+\frac{s}{\sqrt{n}}\Big)
	-\mu\Big(t;\lambda+\frac{s}{\sqrt{n}}\Big)-g\Big(X_{m+1},...,X_{2m},t;\lambda +\frac{r_i}{\sqrt{n}}\Big)
	+\mu\Big(t;\lambda+\frac{r_i}{\sqrt{n}}\Big)\Big)\Big|\Bigg)dM(t)
	\\&= 	\int\limits_{-\infty}^{+\infty}\bigg(E\sup_{ s\in S_i}\Big|\Big(g\Big(X_1,...,X_m,t;\lambda+\frac{s}{\sqrt{n}}\Big) 	-\mu\Big(t;\lambda+\frac{s}{\sqrt{n}}\Big) 	-g\Big(X_1,...,X_m,t;\lambda +\frac{r_i}{\sqrt{n}}\Big)+	\mu\Big(t;\lambda+\frac{r_i}{\sqrt{n}}\Big)\Big|	\Bigg)^2dM(t).
\end{align*}

Applying the inequality $(a+b)^2\leq 2a^2+2b^2$, from Condition \ref{uslov3}  we get
	\begin{align*}
	EH(X_1,...,X_{2m};S_i) &\leq \!2\!\int\limits_{-\infty}^{+\infty}\!\bigg(E\bigg[\sup_{s\in S}\Big|g\Big(X_1,...,X_m,t;\lambda+\frac{s}{\sqrt{n}}\Big)
\!-\!g\Big(X_1,...,X_m,t;\lambda+\frac{r_i}{\sqrt{n}}\Big)\!\Big|\!\bigg]\!\bigg)^2dM(t)\\&+2\int\limits_{-\infty}^{+\infty}\Big(\sup_{s\in S}\Big|\mu\Big(t;\lambda+\frac{s}{\sqrt{n}}\Big)-\mu\Big(t;\lambda+\frac{r_i}{\sqrt{n}}\Big)\Big|\Big)^2 dM(t) \\&
	 \leq 2\bigg(C\frac{ \delta^2}{n}+
	 \int\limits_{-\infty}^{+\infty}{\Big(}\sup\limits_{s\in S}\Big|d_1\mu(t;\lambda)^T\frac{s-r_i}{\sqrt{n}}\Big|{\Big)^2}dM(t)\bigg)
	 \leq 2\frac{ \delta^2}{n}(C+K),
\end{align*}
where 
\begin{align}\label{constK}
 K=\int\limits_{-\infty}^{+\infty}\sum_{j=1}^p(d_1\mu(t;\lambda)_j)^2dM(t),
\end{align}
which is finite due to Condition \ref{uslov1}.

Since the cardianality of $\mathcal{I}_{0;2m}$ is $\binom{n}{2m}(2m)!$, we get 
\begin{align*}
 \frac{n}{n^{2m}}\sum_{i_1,\ldots,i_{2m}\in\mathcal{I}_{0;2m}}EH(X_1,...,X_{2m};S_i))&\leq \frac{n}{n^{2m}}\binom{n}{2m}(2m)!\cdot 2\frac{ \delta^2}{n}(C+K)
                                                                 \sim 2\delta^2(C+K), \text{ as } n\to\infty.
\end{align*}
 When $i_1,\ldots,i_{2m}\in\mathcal{I}_{c;2m},$ where $c\geq 1$, applying the same technique as before, we have
\begin{align*}
&EH(X_1,...,X_{2m};S_i)\!\leq\! \int\limits_{-\infty}^{+\infty}\!E\bigg(\!\sup_{ s\in S_i}\!\bigg|\!\Big(g\Big(X_1,...,X_m,t;\lambda\!+\!\frac{s}{\sqrt{n}}\Big)
	\!-\!\mu\Big(t;\lambda\!+\!\frac{s}{\sqrt{n}}\Big)
	\!-\!g\Big(X_1,...,X_m,t;\lambda \!+\!\frac{r_i}{\sqrt{n}}\Big)\!+\!
	\mu\Big(t;\lambda\!+\!\frac{r_i}{\sqrt{n}}\!\Big)\!\Big)\!\bigg|\\&\times\sup_{ s\in S_i}
	\bigg|\Big(g\Big(X_{m+1},...,X_{2m},t;\lambda+\frac{s}{\sqrt{n}}\Big)
	-\mu\Big(t;\lambda+\frac{s}{\sqrt{n}}\Big)-g\Big(X_{m+1},...,X_{2m},t;\lambda +\frac{r_i}{\sqrt{n}}\Big)
	+\mu\Big(t;\lambda+\frac{r_i}{\sqrt{n}}\Big)\Big)\bigg|\bigg)dM(t)
\\&\leq \!\bigg(\!\int\limits_{-\infty}^{+\infty}\! E\bigg(\!\sup_{ s\in S_i}\bigg|\Big(g\Big(X_1,...,X_m,t;\lambda+\frac{s}{\sqrt{n}}\Big)
\!-\!\mu\Big(t;\lambda+\frac{s}{\sqrt{n}}\Big)
\!-\!g\Big(X_1,...,X_m,t;\lambda \!+\!\frac{r_i}{\sqrt{n}}\!\Big)+
\mu\Big(t;\lambda+\frac{r_i}{\sqrt{n}}\Big)\Big)\bigg|\bigg)^2dM(t)\bigg)^\frac{1}{2}\\&\times\!\bigg(\!\int\limits_{-\infty}^{+\infty}\!E\bigg(\!\sup_{ s\in S_i}\!\bigg|\!\Big(g\Big(X_{m+1},...,X_{2m},t;\lambda\!+\!\frac{s}{\sqrt{n}}\!\Big)
\!-\!\mu\Big(t;\lambda\!+\!\frac{s}{\sqrt{n}}\!\Big)
\!-\!g\Big(X_{m+1},...,X_{2m},t;\lambda \!+\!\frac{r_i}{\sqrt{n}}\!\Big)\!+\!
\mu\Big(t;\lambda\!+\!\frac{r_i}{\sqrt{n}}\!\Big)\!\Big)\!\bigg|\!\bigg)^2dM(t)\!\bigg)^\frac12
\\&\leq \!\bigg(2\bigg(\int\limits_{-\infty}^{+\infty}\! E\bigg(\!\sup_{ s\in S_i}\bigg|\Big(g\Big(X_1,...,X_m,t;\lambda+\frac{s}{\sqrt{n}}\Big)
\!-\!g\Big(X_1,...,X_m,t;\lambda \!+\!\frac{r_i}{\sqrt{n}}\Big)\bigg|\!\bigg)^4dM(t)\bigg)^\frac{1}{2}\\&+2\int\limits_{-\infty}^{+\infty}\bigg(\sup_{ s\in S_i}\bigg|d_1\mu\Big(t;\lambda\Big)^T\frac{s-r_i}{\sqrt{n}}\bigg|\bigg)^2dM(t)\bigg)^\frac{1}{2}\!\bigg(\!2\bigg(\!\int\limits_{-\infty}^{+\infty}\! E\bigg(\!\sup_{ s\in S_i}\!\bigg|\!\Big(\!g\Big(\!X_{m+1},...,X_{2m},t;\lambda\!+\!\frac{s}{\sqrt{n}}\!\Big)
\!\\&-\!g\Big(\!X_{m+1},...,X_{2m},t;\lambda \!+\!\frac{r_i}{\sqrt{n}}\!\Big)\!\bigg|\!\bigg)^4\!dM(t)\!\bigg)^\frac{1}{2}+2\int\limits_{-\infty}^{+\infty}\bigg(\sup_{ s\in S_i}\bigg|d_1\mu\Big(t;\lambda\Big)^T\frac{s-r_i}{\sqrt{n}}\bigg|\bigg)^2dM(t)\bigg)^\frac12\\&
    \leq 2\Big(\varepsilon^\frac12+\frac{\delta^2}{n}K\Big).
\end{align*}

The cardinality of $\mathcal{I}_{c;2m}$ is proportional to $n^{2m-c}$ as $n\to\infty$,
hence  we get 
\begin{align*}
 \frac{n}{n^{2m}}\sum_{i_1,\ldots,i_{2m}\in\mathcal{I}_{c;2m}}EH(X_{i_1},...,X_{i_{2m}};S_i))=O\Big(\frac1{n^{c-1}}\Big).
\end{align*}
Therefore,
\begin{align*}
 \frac{n}{n^{2m}}\sum_{i_1,\ldots,i_{2m}}EH(X_{i_1},...,X_{i_{2m}};S_i))\leq 2\delta^2(C+K) + 2\sqrt{\varepsilon} + O\Big(\frac1{n}\Big).
\end{align*}

Choosing $\delta<\sqrt{\frac{\eta}{4(C+K)}}$ the second sum of \eqref{dekompSupremuma} becomes smaller than 
$\frac{\eta}2$
as $n\to\infty$. We now prove that, for this choice of $\delta$, the first summand of \eqref{dekompSupremuma}
is also smaller than $\frac{\eta}2$ with probability 1. Using the Chebyshev inequality we get
\begin{align*}
 P\Big\{&\frac{n}{n^{2m}}\sum_{i_1,\ldots,i_{2m}}
 \big(H(X_1,...,X_{2m};S_i)-EH(X_1,...,X_{2m};S_i)\big)>\frac{\eta}{2}\Big\}
 \\&\leq 
 \frac{4}{\eta^2}E\Big(\frac{n}{n^{2m}}\sum_{i_1,\ldots,i_{2m}}
 \big(H(X_1,...,X_{2m};S_i)-EH(X_1,...,X_{2m};S_i)\big)\Big)^2 
 \\&= \frac{4}{\eta^2}\frac{n^2}{n^{4m}}\sum_{i_1,\ldots,i_{4m}}{\rm Cov}\big(H(X_1,...,X_{2m};S_i),
 H(X_{2m+1},...,X_{4m};S_i)\big).
\end{align*}
Partition the set of all $4m$-tuples into sets $\mathcal{I}_{c;4m}$ defined as before. Define
\footnote[2]{Strictly speaking, $\zeta(c)$ depends not only on $c$,  however, the bounds in 
Condition \ref{uslov3} hold for all variations which makes the notation justifiable.} 
\begin{align*}
 \zeta(c)={\rm Cov}\big(H(X_1,...,X_{2m};S_i),
 H(X_{2m+1},...,X_{4m};S_i)\big), \;\; \{i_{1},\ldots,i_{4m}\}\in\mathcal{I}_{c;4m}.
\end{align*}

When $c=0$ the covariance
is equal to zero. When $c=1$, the covariance is non-zero only when one of the indices among the first $2m$ is 
equal to one among the last $2m$. In this case we have
\begin{align*}
 \zeta(1)&\leq
EH(X_1,...,X_{2m};S_i)
 H(X_{1},X_{2m+1}...,X_{4m-1};S_i)
 \\&{\leq}\int\limits_{-\infty}^{+\infty}\int\limits_{-\infty}^{+\infty}
 E\bigg[\sup_{s\in S_i}\Big|g\Big(X_1,...,X_m,t_1;\lambda+\frac{s}{\sqrt{n}}\Big)-g\Big(X_1,...,X_m,t_1;{\lambda}+\frac{r_i}{\sqrt{n}}\Big)
 -\mu\Big(t_1;\lambda+\frac{s}{\sqrt{n}}\Big)
 +\mu\Big(t_1;\lambda+\frac{r_i}{\sqrt{n}}\Big)\Big|\\&\times\sup_{s\in S_i}\Big|g\Big(X_{m+1},...,X_{2m},t_1;\lambda+\frac{s}{\sqrt{n}}\Big)
 -g\Big(X_{m+1},...,X_{2m},t_1;\lambda+\frac{r_i}{\sqrt{n}}\Big)-\mu\Big(t_1;\lambda+\frac{s}{\sqrt{n}}\Big)
 +\mu\Big(t_1;\lambda+\frac{r_i}{\sqrt{n}}\Big)\Big|
 \\&\times\!\sup_{s\in S_i}\!\Big|g\Big(\!X_1,X_{2m+1},...,X_{3m-1},t_2;\lambda\!+\!\frac{s}{\sqrt{n}}\!\Big)\!-\!g\Big(\!X_1,X_{2m+1},...,X_{3m-1},t_2;{\lambda\!+\!\frac{r_i}{\sqrt{n}}}\!\Big)\\&-\mu\Big(t_2;\lambda+\frac{s}{\sqrt{n}}\Big)
 +\mu\Big(t_2;\lambda+\frac{r_i}{\sqrt{n}}\Big)\Big|
 \cdot\sup_{s\in S_i}\Big|g\Big(X_{3m},...,X_{4m-1},t_2;\lambda+\frac{s}{\sqrt{n}}\Big)
 \\&-\!g\Big(X_{3m},...,X_{4m-1},t_2;{\lambda}\!+\!\frac{r_i}{\sqrt{n}}\Big)\!-\!\mu\Big(t_2;\lambda\!+\!\frac{s}{\sqrt{n}}\Big)
 \!+\!\mu\Big(t_2;\lambda\!+\!\frac{r_i}{\sqrt{n}}\Big)\!\Big|\bigg]dM(t_1)dM(t_2).
\end{align*}

Applying the Cauchy-Schwarz inequality and grouping back the integrals we get
\begin{align*}
&\zeta(1)\!\leq\!
 \int\limits_{-\infty}^{+\infty}\!\bigg(E\bigg[\sup_{s\in S_i}
 \Big|g\Big(X_1,...,X_m,t_1;\lambda+\frac{s}{\sqrt{n}}\Big)
 -g\Big(X_1,...,X_m,t_1;{\lambda}+\frac{r_i}{\sqrt{n}}\Big)
 -\mu\Big(t_1;\lambda+\frac{s}{\sqrt{n}}\Big)
  +\mu\Big(t_1;\lambda+\frac{r_i}{\sqrt{n}}\Big)\Big|\bigg]^2\bigg)^\frac{1}{2}\\&\times
 E\bigg[\sup_{s\in S}\Big|g\Big(X_{m+1},...,X_{2m},t_1;\lambda+\frac{s}{\sqrt{n}}\Big)
 -g(X_{m+1},...,X_{2m},t_1;{\lambda+\frac{r_i}{\sqrt{n}}})
 -\mu\Big(t_1;\lambda+\frac{s}{\sqrt{n}}\Big)
 +\mu\Big(t_1;\lambda+\frac{r_i}{\sqrt{n}}\Big)\Big|\bigg]dM(t_1)
 \\&\times \int\limits_{-\infty}^{+\infty} 
 \bigg(E\bigg[\sup_{s\in S}\Big|g\Big(X_1,X_{2m+1},...,X_{3m-1},t_2;\lambda+\frac{s}{\sqrt{n}}\Big)
  -g\Big(X_1,X_{2m+1},...,X_{3m-1},t_2;{\lambda+\frac{r_i}{\sqrt{n}}}\Big)
  -\mu\Big(t_2;\lambda+\frac{s}{\sqrt{n}}\Big)
  \\&+\mu\Big(t_2;\lambda+\frac{r_i}{\sqrt{n}}\Big)\Big|\bigg]^2\bigg)^\frac{1}{2} 
  E\bigg[\sup_{s\in S}\Big|g\Big(X_{3m},...,X_{4m-1},t_2;\lambda+\frac{s}{\sqrt{n}}\Big)
 -g\Big(X_{3m},...,X_{4m-1},t_2;{\lambda+\frac{r_i}{\sqrt{n}}}\Big)
 \\&-\mu\Big(t_2;\lambda+\frac{s}{\sqrt{n}}\Big)
 +\mu\Big(t_2;\lambda+\frac{r_i}{\sqrt{n}}\Big)\Big|\bigg]dM(t_2)
 \\&
=\bigg(\int\limits_{-\infty}^{+\infty}\bigg(E\bigg[\sup_{s\in S_i}
 \Big|g\Big(X_1,...,X_m,t_1;\lambda+\frac{s}{\sqrt{n}}\Big)
 -g\Big(X_1,...,X_m,t_1;{\lambda}+\frac{r_i}{\sqrt{n}}\Big)
 -\mu\Big(t_1;\lambda+\frac{s}{\sqrt{n}}\Big)
  +\mu\Big(t_1;\lambda+\frac{r_i}{\sqrt{n}}\Big)\Big|\bigg]^2\bigg)^\frac{1}{2} 
\\&\times\! E\bigg[\!\sup_{s\in S}\!\Big|g\Big(X_{m+1},...,X_{2m},t_1;\lambda\!+\!\frac{s}{\sqrt{n}}\!\Big)
\! -\!g\Big(X_{m+1},...,X_{2m},t_1;{\lambda\!+\!\frac{r_i}{\sqrt{n}}}\!\Big)
 \!-\!\mu\Big(t_1;\lambda\!+\!\frac{s}{\sqrt{n}}\!\Big)
 \!+\!\mu\Big(t_1;\lambda\!+\!\frac{r_i}{\sqrt{n}}\Big)\!\Big|\!\bigg]dM(t_1)\!\bigg)^2
\\&\leq \int\limits_{-\infty}^{+\infty}E\bigg[\sup_{s\in S_i}
 \Big|g\Big(X_1,...,X_m,t_1;\lambda+\frac{s}{\sqrt{n}}\Big)
 -g\Big(X_1,...,X_m,t_1;{\lambda}+\frac{r_i}{\sqrt{n}}\Big)
 \!-\!\mu\Big(t_1;\lambda\!+\!\frac{s}{\sqrt{n}}\Big)
  \!+\!\mu\Big(t_1;\lambda\!+\!\frac{r_i}{\sqrt{n}}\Big)\Big|\bigg]^2\!dM(t_1) \!
\\&\times\! \int\limits_{-\infty}^{+\infty}\! \bigg(\!E\bigg[\sup_{s\in S}\!\Big|g\Big(X_{m+1},...,X_{2m},t_1;\lambda\!+\!\frac{s}{\sqrt{n}}\!\Big)
 \!-\!g\Big(X_{m+1},...,X_{2m},t_1;{\lambda\!+\!\frac{r_i}{\sqrt{n}}}\!\Big)
 \!-\!\mu\Big(t_1;\lambda\!+\!\frac{s}{\sqrt{n}}\!\Big)
 \!+\!\mu\Big(t_1;\lambda\!+\!\frac{r_i}{\sqrt{n}}\!\Big)\!\Big|\!\bigg]\!\bigg)^2dM(t_1\!).
 \end{align*}

Applying now the inequality $(a+b)^2\leq 2a^2+2b^2$ to both factors of the product above 
we obtain
\begin{align*}
    \zeta(1)&
    \leq \Bigg(2\int\limits_{-\infty}^{+\infty}E\bigg[\sup_{s\in S}\Big|g\Big(X_1,...,X_m,t_1;\lambda+\frac{s}{\sqrt{n}}\Big)
    -g\Big(X_1,...,X_m,t_1;{\lambda+\frac{r_i}{\sqrt{n}}}\Big)\Big|\bigg]^2dM(t_1)
    \\&+2\int\limits_{-\infty}^{+\infty}
    \bigg(\sup_{s\in S}\Big|\mu\Big(t_1;\lambda+\frac{s}{\sqrt{n}}\Big)
    -\mu\Big(t_1;\lambda+\frac{r_i}{\sqrt{n}}\Big)\Big|\bigg)^2dM(t_1)\Bigg)
    \\&\times\!\Bigg(\!2\!\int\limits_{-\infty}^{+\infty}\!\bigg(\!E\bigg[\!\sup_{s\in S}\!\Big|g\!\Big(\!X_{m+1},...,X_{2m},t_1;\lambda\!+\!\frac{s}{\sqrt{n}}\!\Big)
    \!-\!g\Big(\!X_{m+1},...,X_{2m},t_1;\!{\lambda+\frac{r_i}{\sqrt{n}}}\Big)\!\Big|\!\bigg]\!\bigg)^2\!dM\!(t_1)
    \\&+2\int\limits_{-\infty}^{+\infty}\bigg(\sup_{s\in S}\Big|\mu\Big(t_1;\lambda+\frac{s}{\sqrt{n}}\Big)
    -\mu\Big(t_1;\lambda+\frac{r_i}{\sqrt{n}}\Big)\Big|\bigg)^2dM(t_1)\Bigg)
    \\&\leq  \Big(2\varepsilon^\frac{1}{2}+2\frac{\delta^2}{n}K\Big)\Big(2\frac{\delta^2}{n}(C+K)\Big)
\sim4(C+K)\varepsilon^\frac{1}{2}\frac{\delta^2}{n}+o\Big(\frac{1}{n}\Big),
\end{align*}
where $K$ is defined in \eqref{constK}.  

Let $c\geq 2$. For any such variation $X'_1,\ldots,X'_{4m}$ it holds 
\begin{align*}
 \zeta(c)&\leq \big({\rm Var} H(X'_1,...,X'_{2m};S_i)\big)^{\frac12} 
 \big({\rm Var} H(X'_{2m+1},...,X'_{4m};S_i)\big)^{\frac12}\\
        &\leq {\rm Var} H(X_{1},...,X_{2m};S_i)\leq EH^2(X_{1},...,X_{2m};S_i),
\end{align*}
 where $\{X_{1},...,X_{2m}\}$ is the variation for which the function $H$ from \eqref{Hfunkcija} has the maximal variance. 
Applying the same techniques used for the previous case we get
\begin{align*}
 \zeta(c)&\leq \int\limits_{-\infty}^{+\infty}E\bigg(\sup_{s\in S_i}
 \bigg|g\Big(X_{1},...,X_{m},t;\lambda+\frac{s}{\sqrt{n}}\Big)
 -g(X_{1},...,X_{m},t;\lambda+\frac{r_i}{\sqrt{n}})
 -\mu\Big(t;\lambda+\frac{s}{\sqrt{n}}\Big)+\mu\Big(t;\lambda+\frac{r_i}{\sqrt{n}}\Big)\bigg|\\&\times
 \sup_{s\in S}\bigg|g\Big(X_{{m+1}},...,X_{{2m}},t;\lambda+\frac{s}{\sqrt{n}}\Big)
 -g\Big(X_{{m+1}},...,X_{{2m}},t;\lambda+\frac{r_i}{\sqrt{n}}\Big)-\mu\Big(t;\lambda+\frac{s}{\sqrt{n}}\Big)
 +\mu\Big(t;\lambda+\frac{r_i}{\sqrt{n}}\Big)\bigg|\bigg)^2dM(t_1)\\&\leq
    8\bigg(\int\limits_{-\infty}^{+\infty}
    E\bigg(\sup_{s\in S_i}\bigg|g\Big(X_{1},...,X_{m},t_1;\lambda+\frac{s}{\sqrt{n}}\Big)
    -g\Big(X_{1},...,X_{m},t_1;\lambda+\frac{r_i}{\sqrt{n}}\Big)\bigg|\bigg)^4dM(t_1)
    \\&+\int\limits_{-\infty}^{+\infty}
    \bigg(\sup_{s\in S}\bigg|\mu\Big(t;\lambda+\frac{s}{\sqrt{n}}\Big)+\mu\Big(t;\lambda+\frac{r_i}{\sqrt{n}}\Big)\bigg|\bigg)^4dM(t_1)\bigg)^\frac12\\&\times\!\bigg(\!\int\limits_{-\infty}^{+\infty}\!
    E\bigg(\!\sup_{s\in S_i}\bigg|g\Big(X_{m+1},...,X_{2m},t_1;\lambda\!+\!\frac{s}{\sqrt{n}}\Big)
    \!-\!g\Big(X_{m+1},...,X_{2m},t_1;\lambda\!+\!\frac{r_i}{\sqrt{n}}\Big)\!\bigg|\!\bigg)^4\!dM(t_1)
    \\&+\int\limits_{-\infty}^{+\infty}
    \bigg(\sup_{s\in S}\bigg|\mu\Big(t;\lambda+\frac{s}{\sqrt{n}}\Big)+\mu\Big(t;\lambda+\frac{r_i}{\sqrt{n}}\Big)\bigg|\bigg)^4dM(t_1)\bigg)^\frac12
\\&<8\Big(\varepsilon+\frac{\delta^4}{n^2}K\Big).
\end{align*}

{Since the cardinality of $\mathcal{I}_c$ is proportianal to $n^{4m-c}$ (say $\kappa_c\cdot n^{4m-c}$)  we get}
\begin{align*}
 E(\sup_{s\in S_i}nQ_n(s,r_i))^2&\!\leq\! \frac{n^2}{n^{4m}}\!\sum_{i_1,\ldots,i_{4m}}\!{\rm Cov}\!\big(H(X_1,...,X_{2m};S_i),
 H(X_{2m+1},...,X_{4m};S_i)\big)\!
 \sim\! \sum_{c=1}^{4m-1}\!  \kappa_c \cdot \frac{\zeta(c)}{n^{c-2}}   
 \!<\! {\rm const}\! \cdot \!(\varepsilon^\frac{1}{2}\delta^2 \!+\!\varepsilon) \!+\! o\Big(\frac1n\Big),
\end{align*}
and hence $P\Big\{\sup_{s\in S_i}nQ_n(s,r_i)<\frac\eta2\Big\}{\to}1$.

We now pass to the second summand of \eqref{dekompozicija}. Applying the Chebyshev inequality we get
\begin{align*}
 P\Big\{nQ_n(r_i,0)\geq \eta\Big\}&\leq \frac1{\eta^2}E(nQ_n(r_i,0))^2\\&
 =\!\frac1{\eta^2}\frac{n^2}{n^{4m}}\!\sum_{i_1,\ldots,i_{4m}}\!
 E\bigg(\!g\Big(X_{i_1},...,X_{i_m},t;\lambda\!+\!\frac{r_i}{\sqrt{n}}\Big)\!
 -\!g(X_{i_1},...,X_{i_m},t;\lambda)
 \!-\!\mu\Big(t;\lambda+\frac{r_i}{\sqrt{n}}\Big)\!\bigg)
 \\&\times\bigg(g\Big(X_{i_{m+1}},...,X_{i_{2m}},t;\lambda+\frac{r_i}{\sqrt{n}}\Big)
 -g(X_{i_{m+1}},...,X_{i_{2m}},t;\lambda)
 -\mu\Big(t;\lambda+\frac{r_i}{\sqrt{n}}\Big)\bigg)
 \\&\times\bigg(g\Big(X_{i_{2m+1}},...,X_{i_{3m}},t;\lambda+\frac{r_i}{\sqrt{n}}\Big)
 -g(X_{i_{2m+1}},...,X_{i_{3m}},t;\lambda)
 -\mu\Big(t;\lambda+\frac{r_i}{\sqrt{n}}\Big)\bigg)
 \\&\times\bigg(g\Big(X_{i_{3m+1}},...,X_{i_{4m}},t;\lambda+\frac{r_i}{\sqrt{n}}\Big)
 -g(X_{i_{3m+1}},...,X_{i_{4m}},t;\lambda)
 -\mu\Big(t;\lambda+\frac{r_i}{\sqrt{n}}\Big)\bigg).
\end{align*}
Since expectation of each factor in the sum above is zero, the expectation of the product will be zero whenever
there is at least one factor independent from all the others. Hence it is easy to see that for 
$\{i_{1},\ldots,i_{4m}\}\in \mathcal{I}_{0;4m}\cup\mathcal{I}_{1;4m}$ the summands are equal to zero.
For the rest of summands, using the previously obtained bound we get an upper bound for the general term $\mathcal{G}$
\begin{align*}
 \mathcal{G}\leq EH(X_{i_1},...,X_{i_{2m}};S)H(X_{i_{2m+1}},...,X_{i_{4m}};S)
 <8\Big(\varepsilon+\frac{d_S^4}{n^2}K\Big),
\end{align*}
where   $d_S$ is the radius od $S$. Therefore,
\begin{align*}
 E(nQ_n(r_i,0))^2<{\rm const}\cdot \varepsilon + o\Big(\frac1n\Big),
\end{align*}
and hence $nQ_n(r_i,0)$ converges to zero in probability.

For the last summand of \eqref{dekompozicija} defined in \eqref{mesoviti}, using the Cauchy-Schwarz inequality
we get
 \begin{align*}
 \sup_{s\in S_i}{|}\widetilde{Q_n}(s,r_i,0){|}&
 \leq
 \bigg(\frac{1}{n^{2m}}\sum_{i_1,...,i_{2m}}\!\int\limits_{-\infty}^{+\infty}\!{\Big(}\sup_{s\in S_i}\Big|g\Big(X_{i_1},...,X_{i_m},t;\lambda\!+\!\frac{s}{\sqrt{n}}\Big)\!-\!\mu\Big(t;\lambda\!+\!\frac{s}{\sqrt{n}}\Big)-
g\Big(X_{i_1},...,X_{i_{2m}},t;\lambda+\frac{r_i}{\sqrt{n}}\Big)
\\&+\mu\Big(t;\lambda+\frac{r_i}{\sqrt{n}}\Big)\Big|{\Big)^{2}}
dM(t)\bigg)^\frac{1}{2}
\!\bigg(
 \frac{1}{n^{2m}}\!\sum_{i_1,...,i_{2m}}\!\int\limits_{-\infty}^{+\infty}\!
 \Big(g\Big(X_{i_{m+1}},...,X_{i_{2m}},t;\lambda+\frac{r_i}{\sqrt{n}}\Big)\!\\&-\!\mu\Big(t;\lambda+\frac{r_i}{\sqrt{n}}\Big)
-g\Big(X_{i_{m+1}},...,X_{i_{2m}},t;\lambda\Big)
\Big)^2dM(t)\bigg)^\frac{1}{2}.
\end{align*}

Combining the steps used for $\sup_{s\in S_i}nQ_n(s,r_i)$ and $nQ_n(r_i,0)$, it is straightforward to show that
$2\sup_{s\in S_i}n\widetilde{Q_n}(s,r_i,0)$ converges to zero in probability, which ends the proof.

\end{proof}

\subsection{U-statistics}

We now present the analogous theorem for U-statistics with estimated parameters. Define
\begin{equation*}
    U_n(\widehat{\lambda}_n)=\frac{1}{\binom{n}{2m}}\sum_{i_1<...<i_{2m}}\Phi(X_{i_1},...,X_{i_{2m}};\widehat{\lambda}_n),
\end{equation*}
where $\Phi(x_{1},...,x_{{2m}};\widehat{\lambda}_n)$ is defined in \eqref{jezgro} and the corresponding auxiliary statistic 
\begin{equation*}
    U^*_n(\lambda)=\frac{1}{\binom{n}{2m}}\sum_{i_1<...<i_{2m}}\Phi_*(X_{i_1},...,X_{i_{2m}};\lambda),
\end{equation*}
where $\Phi_*(x_{1},...,x_{2m};\lambda)$ is defined in \eqref{Vpom}. 

\begin{theorem}\label{raspodelaUsaOcenj} 
Let $X_1,...,X_n$ be a random sample with distribution function $F$. Let all conditions of Theorem \ref{raspodela} hold, and, additionally,
\begin{equation}\label{uslovTeoreme}
   E\Big(\int\limits_{-\infty}^{+\infty}|g_1(X_{1},t;\lambda)|^2dM(t)\Big)<\infty.
\end{equation}
Then
\begin{equation*}
    nU_n(\widehat{\lambda}_n)\stackrel{D}{\rightarrow}\frac{2m(2m-1)}{2}\sum_{i=1}^\infty(\upsilon_k^*Z_k^2-\upsilon_k),
\end{equation*}
where $\{\upsilon_k^*\}$ and $\{\upsilon_k\}$, $k=1,2,...,$ are sequences of eigenvalues of operators $A^*$ and $A$, defined in \eqref{operatorA*} and \eqref{operatorA}, respectively, while $Z_{k},k=1,2,...,$ are i.i.d. standard normal random variables.
\end{theorem}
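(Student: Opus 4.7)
The plan is to reduce the asymptotic distribution of $nU_n(\widehat{\lambda}_n)$ to that of $nV_n(\widehat{\lambda}_n)$ by quantifying the discrepancy coming from index tuples with repeated coordinates (the \emph{diagonal} part). Once this is done, Theorem \ref{raspodela} handles the V-statistic part, while the diagonal part will converge in probability to the deterministic constant $\binom{2m}{2}\sum_k\upsilon_k$, yielding the claim via Slutsky's theorem.

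The starting point is the exact identity, valid for any symmetric kernel,
\begin{equation*}
n^{2m}V_n(\widehat{\lambda}_n)=(2m)!\binom{n}{2m}U_n(\widehat{\lambda}_n)+\sum_{c=1}^{2m-1}\sum_{(i_1,\ldots,i_{2m})\in\mathcal{I}_{c;2m}}\Phi(X_{i_1},\ldots,X_{i_{2m}};\widehat{\lambda}_n),
\end{equation*}
where $\mathcal{I}_{c;2m}$ are the diagonal classes already used in the proof of Theorem \ref{raspodela}. Setting $a_n:=n^{2m}/\bigl((2m)!\binom{n}{2m}\bigr)=1+O(n^{-1})$, this rearranges to
\begin{equation*}
nU_n(\widehat{\lambda}_n)=a_n\,nV_n(\widehat{\lambda}_n)-a_n R_n(\widehat{\lambda}_n),\quad R_n(\widehat{\lambda}_n):=\frac{1}{n^{2m-1}}\sum_{c=1}^{2m-1}\sum_{\mathcal{I}_{c;2m}}\Phi(X_{i_1},\ldots,X_{i_{2m}};\widehat{\lambda}_n).
\end{equation*}
Since $a_n\to 1$ and, by Theorem \ref{raspodela}, $nV_n(\widehat{\lambda}_n)\xrightarrow{D}\binom{2m}{2}\sum_k\upsilon_k^*Z_k^2$, Slutsky's theorem reduces the whole problem to proving the in-probability convergence $R_n(\widehat{\lambda}_n)\xrightarrow{P}\binom{2m}{2}\sum_k\upsilon_k$.

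Next I would split $R_n$ by diagonal class. For $c\geq 2$, the cardinality $|\mathcal{I}_{c;2m}|=O(n^{2m-c})$ makes the prefactor $O(n^{1-c})$; together with the moment hypotheses on $\Phi_*$ and Condition \ref{uslov3} (to transfer from $\widehat\lambda_n$ back to a neighbourhood of $\lambda$), these contributions are $o_P(1)$. The dominant term is $c=1$: by the symmetry of $\Phi$, it equals $\binom{2m}{2}$ times a U-statistic of order $2m-1$ with appropriately symmetrized kernel $\widetilde\Phi(x_1,\ldots,x_{2m-1};\gamma):=\Phi(x_1,x_1,x_2,\ldots,x_{2m-1};\gamma)$, up to a $1+O(n^{-1})$ combinatorial factor. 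After replacing $\widehat\lambda_n$ by $\lambda$, the classical law of large numbers for U-statistics drives this to
\begin{equation*}
\binom{2m}{2}\,E\,\widetilde\Phi(X_1,\ldots,X_{2m-1};\lambda)=\binom{2m}{2}\,E\,\varphi_2(X_1,X_1;\lambda)=\binom{2m}{2}\sum_{k=1}^{\infty}\upsilon_k,
\end{equation*}
the first equality coming from conditioning on $X_1$ together with the definition of $\varphi_2$, and the second being the trace formula. Both steps are legitimate under hypothesis \eqref{uslovTeoreme} and the positive semi-definite representation $\varphi_2(x,y;\lambda)=\frac{m}{2m-1}\int g_1(x,t;\lambda)g_1(y,t;\lambda)\,dM(t)$ read off from \eqref{fi2}, which makes $A$ trace-class on $L^2(\mathbb{R},F)$.

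The main obstacle I anticipate is the parameter-substitution step $\widehat\lambda_n\rightsquigarrow\lambda$ inside the diagonal U-statistic. I would handle it by decomposing $\widetilde\Phi(\cdot;\widehat\lambda_n)=\widetilde\Phi(\cdot;\lambda)+\bigl(\widetilde\Phi(\cdot;\widehat\lambda_n)-\widetilde\Phi(\cdot;\lambda)\bigr)$. The first piece is a standard U-statistic at the true parameter to which the classical LLN applies under the assumed integrability. The second piece is controlled by a covering argument over a ball around $\lambda$ into which $\widehat\lambda_n$ falls with probability tending to one by Condition \ref{uslov2}, exactly in the spirit of the treatment of $\sup_{s\in S_i}nQ_n(s,0)$ in the proof of Theorem \ref{raspodela}: Condition \ref{uslov3} delivers uniform $L^2(dM)$ control of the increment of $g$ in the parameter, and this propagates through Cauchy--Schwarz estimates to show the remainder is $o_P(1)$. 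Putting everything together via Slutsky yields
\begin{equation*}
nU_n(\widehat\lambda_n)=a_n\bigl(nV_n(\widehat\lambda_n)-R_n(\widehat\lambda_n)\bigr)\xrightarrow{D}\binom{2m}{2}\sum_{k=1}^{\infty}\bigl(\upsilon_k^*Z_k^2-\upsilon_k\bigr),
\end{equation*}
as required.
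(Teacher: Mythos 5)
Your argument is correct and its skeleton is the one the paper uses: separate the V-statistic into the off-diagonal U-statistic plus the diagonal classes $\mathcal{I}_{c;2m}$, kill the $c\geq 2$ classes by cardinality counting, and drive the $c=1$ class to a trace via the law of large numbers for U-statistics with estimated parameters together with the identity $E\varphi_2(X_1,X_1;\lambda)=\sum_k\upsilon_k$. The bookkeeping is arranged differently, though, and the difference is not purely cosmetic. The paper studies $n\bigl(U_n(\widehat{\lambda}_n)-U_n^*(\lambda)\bigr)$, feeding in the first conclusion of Theorem \ref{raspodela} (namely $n(V_n(\widehat{\lambda}_n)-V_n^*(\lambda))\stackrel{P}{\to}0$) and then the known limit of $nU_n^*(\lambda)$; its diagonal term involves the difference $\Phi(\cdot;\widehat{\lambda}_n)-\Phi_*(\cdot;\lambda)$, so it must establish nuclearity of \emph{both} $A$ and $A^*$ (the $E|\varphi_2^*(X_1,X_1)|<\infty$ computation in the proof exists precisely for this) and it outsources the estimated-parameter LLN to Iverson--Randles. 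You instead write $nU_n(\widehat{\lambda}_n)=a_n\,nV_n(\widehat{\lambda}_n)-a_nR_n(\widehat{\lambda}_n)$ and use the \emph{distributional} conclusion of Theorem \ref{raspodela} directly, so the $\upsilon_k^*$ contribution never has to be extracted from a diagonal sum: you only need the trace of $A$, i.e.\ only the hypothesis \eqref{uslovTeoreme}, and you propose to prove the estimated-parameter LLN for the diagonal kernel yourself by the covering argument already developed for $Q_n$. This buys a marginally leaner set of trace-class verifications and self-containedness, at the cost of redoing (in sketch form) a uniform-convergence argument the paper simply cites; both routes require the non-degeneracy assumption $E(\varphi_2^*(X_1,X_2;\lambda))^2>0$ and both land on the same limit $\binom{2m}{2}\sum_k(\upsilon_k^*Z_k^2-\upsilon_k)$.
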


\begin{proof}
Using the result on limiting distribution of degenerate U-statistics (see e.g. \cite{korolyuk}), we get that
$$nU^*_n(\lambda)\stackrel{d}{\rightarrow}\frac{2m(2m-1)}{2}\sum\limits_{k=1}^\infty{\upsilon^*_k}(Z^2_{k}-1).$$

Thus it is enough to prove that 
\begin{align}\label{razlikaUstat}
    n(U_n(\widehat{\lambda}_n)-U_n^*(\lambda))\overset{P}\to \binom{2m}{2}\sum\limits_{k=1}^\infty(\upsilon^*_k-\upsilon_k).
\end{align}

Consider the difference 
\begin{align*}
    n(U_n(\widehat{\lambda}_n)-U_n^*(\lambda))
    &=\frac{n}{n(n-1)\cdots(n-2m+1)}\bigg(n^{2m}(V_n(\widehat{\lambda}_n)-V^*_n(\lambda))\\&-\sum_{{\{i_1,\ldots,i_{2m}\}\in\mathcal{I}}}\Big(\Phi(X_{i_1},...,X_{i_{2m}};\widehat{\lambda}_n)-\Phi_*(X_{i_1},...,X_{i_{2m}};{\lambda})\Big)\bigg),
\end{align*}
where the set $\mathcal{I}$ is the set of $2m$-tuples where at least one entry repeats. 
From the proof Theorem \ref{raspodela} we know that 
$n(V_n(\widehat{\lambda}_n)-V_n^*(\lambda))\overset{p}{\to}0$, so it remains to find the limit in probability of
\begin{align*}
    \Xi_n\!&=\!\frac{1}{(n\!-\!1)\!\cdots\!(n\!-\!2m\!+\!1)}\!\sum_{{\{i_1,\ldots,i_{2m}\}\in\mathcal{I}}}\!\Big(\!\Phi(X_{i_1},...,X_{i_{2m}};\!\widehat{\lambda}_n\!)\!-\!\Phi^*(X_{i_1},...,X_{i_{2m}};\!{\lambda})\!\Big)\\
    &=\!\frac{1}{(n\!-\!1)\!\cdots\!(n\!-\!2m\!+\!1)}\!\sum_{{\{i_1,\ldots,i_{2m}\}\in\mathcal{I}_1}}\!\Big(\!\Phi(X_{i_1},...,X_{i_{2m}};\!\widehat{\lambda}_n)\!-\!\Phi^*(X_{i_1},...,X_{i_{2m}};\!{\lambda})\!\Big)\\
    &+\frac{1}{(n\!-\!1)\!\cdots\!(n\!-\!2m\!+\!1)}\!\sum_{{\{i_1,\ldots,i_{2m}\}\in\mathcal{I}\setminus\mathcal{I}_1}}\!\Big(\!\Phi(X_{i_1},...,X_{i_{2m}};\!\widehat{\lambda}_n)\!-\!\Phi^*(X_{i_1},...,X_{i_{2m}};\!{\lambda})\!\Big)\\
    &=\Xi_n^{(1)}+\Xi_n^{(2)},
\end{align*}
where $\mathcal{I}_1$ is the set of all $2m$-tuples of indices such that only one entry repeats only once and all the others are different.  For the first summand we have
\begin{align*}
    \Xi_n^{(1)}\!&=\!
            \frac{n}{n-2m+1}\binom{2m}{2}\frac{1}{\binom{n}{2m\!-\!1}}\!\sum\limits_{i_1\neq i_2\neq\cdots\neq i_{2m-1}}\!\bigg(\Phi(X_{i_1},X_{i_1},X_{i_2},...,X_{i_{2m-1}};\!\widehat{\lambda}_n)-\Phi_*(X_{i_1},X_{i_1},X_{i_2},...,X_{i_{2m-1}};\lambda)\bigg)\\&=\frac{n}{n-2m+1}\binom{2m}{2}\frac{1}{2m-1}\frac{1}{\binom{n}{2m-1}}\sum\limits_{i_1\neq i_2\neq\cdots\neq i_{2m-1}}\bigg(\phi(X_{i_1},X_{i_2},...,X_{i_{2m-1}};\widehat{\lambda}_n)-\phi_*(X_{i_1},X_{i_2},...,X_{i_{2m-1}};\lambda)\bigg),
\end{align*}
where $\phi(X_{i_1},...,X_{i_{2m-1}};\widehat{\lambda}_n)$ and $\phi_*(X_{i_1},...,X_{i_{2m-1}};{\lambda})$ are symmetrized versions of  $\Phi(X_{i_1},X_{i_1},X_{i_2},...,X_{i_{2m-1}};\widehat{\lambda}_n)$ and $\Phi_*(X_{i_1},X_{i_1},X_{i_2},...,X_{i_{2m-1}};{\lambda})$, respectively.

Using the law of large numbers for U-statistics (see e.g. \cite{korolyuk}), and the law of large numbers for U-statistics with estimated parameters (see \cite{iverson1989effects}), we get that
\begin{align*}
       \Xi_n^{(1)} \overset{P}\to \binom{2m}{2}\frac{1}{2m-1}\big(E\phi(X_{i_1},...,X_{i_{2m-1}};\widehat{\lambda}_n)-E\phi_*(X_{i_1},X_{i_2},...,X_{i_{2m-1}};\lambda)\big).
       \end{align*}
       
    We then have
       \begin{align*}
           E\phi_*(X_{i_1},X_{i_2},...,X_{i_{2m-1}};\lambda)\big)&=
           \frac{m^2(2m-1)!}{(2m)!}E\bigg(\int\limits_{-\infty}^{+\infty}\bigg(g(X_{1},X_2,...,X_{i_m},t;\lambda)+d_1\mu(t;\lambda)^T\frac{1}{m}\sum_{j\in\{1,...,m\}}\alpha(X_{j})\bigg)\\&\times\bigg(g(X_{{1}},X_{m+1},...,X_{{2m-1}},t;\lambda)+d_1\mu(t;\lambda)^T\frac{1}{m}\sum_{k\in\{1,{m+1},..,{2m-1}\}}\alpha(X_{k})\bigg)dM(t)\bigg)
        \\&
        =(2m-1)E\varphi_2(X_1,X_1; \lambda)=(2m-1)\sum\limits_{k=1}^\infty\upsilon_k^*,
    \end{align*}
    provided that the operator $A^*$ is nuclear, i.e. $\sum_{k=1}^\infty|\upsilon^*_k|<\infty$. This will be the case if $E|\varphi_2^*(X_1,X_1; \lambda)|<\infty$. Indeed,
    \begin{align*}
        E|\varphi^*_2(X_1,X_1)|&=E\bigg|\int\limits_{-\infty}^{+\infty}\bigg(g_1(X_{1},t;\lambda)+d_1\mu(t;\lambda)^T\frac{1}{m}\alpha(X_{1})\bigg)^2dM(t)\bigg|\\
        &\leq 2E\int\limits_{-\infty}^{+\infty}\bigg|g_1(X_{1},t;\lambda)\bigg|^2dM(t)+\frac{2}{m^2}E\bigg|\alpha(X_{1})\bigg|^2\cdot\int\limits_{-\infty}^{+\infty}\bigg|d_1\mu(t;\lambda)^T\bigg|^2dM(t),
    \end{align*}
   which is finite due to \eqref{uslovTeoreme} and Conditions \ref{uslov1} and \ref{uslov2}.  
   
Analogously, $E\phi(X_{i_1},...,X_{i_{2m-1}};{\widehat{\lambda}_{n}})=(2m-1)\sum_{k=1}^\infty\upsilon_k$,
and hence
\begin{align*}
    \Xi_n^{(1)} \overset{P}\to \binom{2m}{2}\sum\limits_{k=1}^\infty(\upsilon_k-\upsilon_k^*).
\end{align*}

Next we show that $\Xi_n^{(2)}\overset{P}\to0$. Let $1<r<2m$ and let $2m-r$  of $2m$ indices in the expression for $\Xi_n^{(2)}$ be different.  Then, the corresponding terms of the sum are proportional to
\begin{equation*}
        \begin{split}
            &\frac{n}{(n\!-\!2m\!+\!1)\!\cdots\!(n\!-\!2m\!+\!r)}\!\cdot\!
            \frac{1}{\binom{n}{2m\!-\!r}}\sum\limits_{i_1\neq i_2\neq\cdots\neq i_{2m-r}}\bigg(\phi^{(r)}(X_{i_1},X_{i_2},...,X_{i_{2m-r}};\widehat{\lambda}_n)-\phi^{(r)}_*(X_{i_1},X_{i_2},...,X_{i_{2m-r}};\lambda)\bigg),
        \end{split}
    \end{equation*}
        where $\phi^{(r)}(X_{i_1},X_{i_2},...,X_{i_{2m-r}};\widehat{\lambda}_n)$ and $\phi^{(r)}_*(X_{i_1},X_{i_2},...,X_{i_{2m-r}};\lambda)$ are obtained using the appropriate symmetrization of initial kernels. Using the law of large numbers for U-statistics and the Slutsky theorem, we get that the term above tends to zero in probability. This holds for all $r$, hence  $\Xi_n^{(2)}\overset{P}\to0$
        and \eqref{razlikaUstat} follows.
\end{proof}

\section{Application}

The main application of Theorems \ref{raspodela} and \ref{raspodelaUsaOcenj} lies in goodness-of-fit tests based on equidistribution-type characterizations. Such characterizations have the following form. Let $X_1,...,X_m$ be independent copies of random variable $X$ and let $\omega_1(\cdot)$ and $\omega_2(\cdot)$ be two functions such that
\begin{align}\label{eqChar}
\omega_1(X_1,...,X_m)\overset{d}{=}\omega_2(X_1,...,X_m) 
\end{align}
if and only if the distribution of $X$ belongs to some family $\mathcal{F}$.
The tests based on $L^2$ distance of estimators of V- or U-empirical functions of $\omega_1(\cdot)$ and $\omega_2(\cdot)$ often have the form of degenerate V- or U-statistics of order higher than 2.

One of the examples are test statistics based on V-empirical Laplace transforms from \cite{cuparic2018new},\cite{revista}. There asymptotic distributions can be obtained using Theorem \ref{raspodela}. However, the kernel in this case is differentiable, and it is possible to obtain the asymptotics directly using the mean value theorem. Here we present two examples when this is not possible and the application of our result is called for.

\subsubsection*{Example 1}
Let $X_1,\ldots,X_n$  be a random sample from distribution $F$. Consider testing the composite null  hypothesis 
$F(x)=F_0(x;\lambda)$ where $\lambda$ is the scale parameter. 
A bunch of scale families of distributions can be characterized with equidistribution-type characterizations  of the form \eqref{eqChar} where 
$\omega_1(\cdot)$ and $\omega_2(\cdot)$ are two  homogeneous functions, i.e.
\begin{align*}
    \omega_i(\lambda X_1,...,\lambda X_m)=\lambda\omega_i(X_1,...,X_m),\;\;\lambda>0.
\end{align*} 

Some examples of the characterization of this type are given in \cite{galambos1978characterizations},\cite{obradovic2015three},\cite{milosevic2016some}.

The most natural approach to construct a test is to estimate distribution functions of $\omega_1(\cdot)$ and $\omega_2(\cdot)$ respectively and to  base a test on their difference. A natural approach which yields a degenerate V-statistics is to integrate the squared difference over an empirical measure, i.e. 
\begin{align*}
W_n&=\int(G_n^{(1)}(t)-G_n^{(2)}(t))^2dF_n(t),
\end{align*}
    where $F_n(t)$ is empirical distribution function. Examples of such tests can be found in \cite{IMJournal}.
    
Another possibility, currently not explored yet, is to consider 
\begin{align*}
\widetilde{W}_n(\widehat{\lambda}_{n})&=\int(\widetilde{G}_n^{(1)}(t)-\widetilde{G}_n^{(2)}(t))^2dM(t),
\end{align*}
where $\widetilde{G}^{(i)}$ is U-(V-)  empirical distribution function of the  scaled sample and $\{M(t)\}$ is a finite measure. Without loss of generality we may assume that ${dM(t)}$ is a density function of some random variable.

Applying the Theorem \ref{raspodela} we get that the distribution of $n\widetilde{W}_n(\widehat{\lambda}_{n})$ coincides with  $n\widetilde{W}_n(\lambda)$ which doesn't depend on $\lambda$.  This follows from the fact that 
\begin{align*}
    \widetilde{W}_n(\widehat{\lambda}_{n})&=\frac{1}{n^{2m}}\sum\limits_{i_1,...,i_{2m}}\int (I\{\widehat{\lambda}_{n}\omega_1(X_{i_1},...,X_{i_m})<t\}-I\{\widehat{\lambda}_{n}\omega_2(X_{i_1},...,X_{i_m})\}<t)\\&\times(I\{\widehat{\lambda}_{n}\omega_1(X_{i_{m+1}},...,X_{i_{2m}})<t\}-I\{\widehat{\lambda}_{n}\omega_2(X_{i_{m+1}},...,X_{i_{2m}})<t\})dM(t),
\end{align*}
 which can be represented as
\begin{align*}
     \widetilde{W}_n(\widehat{\lambda}_{n})&=\frac{1}{n^{2m}}\sum\limits_{i_1,...,i_{2m}}\int g(X_{i_1},...,X_{i_m},t;\widehat{\lambda}_n)g(X_{i_{m+1}},...,X_{i_{2m}},t;\widehat{\lambda}_n)dM(t),
\end{align*}
where
\begin{align*}
    g(x_1,...,x_m,t;\lambda)\!=\!\frac{1}{m!}\!\sum_{\pi(m)}\! (I\!\{\!\lambda\omega_1(x_{\pi(1)},...,x_{\pi(m)})\!<\!t\}\!-\!I\!\{\!\lambda\omega_2(x_{\pi(1)},...,x_{\pi(m)})\}\!<\!t).
\end{align*}

Under $H_0$ it holds
\begin{align*}
    \mu(t;\gamma)
    =P_\lambda\{\gamma\omega_1(X_1,...,X_m)<t\})-P_\lambda\{\gamma\omega_2(X_1,...,X_m)<t\})=0.
\end{align*}
  
Last equality holds for each $\gamma$ due to the characterization. Therefore the first derivative  $d_1\mu(t;\lambda)$ will be also equal to zero.

 Consider now the particular case of the Puri-Rubin characterization \cite{puri1970characterization}, i.e. when $\omega_1(X_1,X_2)=|X_1-X_2|$ and $\omega_2(X_1)=X_1$, which implies that $F$ is exponential distribution with some scale parameter $\lambda$, and let $dM(t)=e^{-t}dt$.

Then the corresponding test statistic is
\begin{align*}
    \widetilde{W}^{\textup{PR}}_n(\widehat{\lambda}_{n})&=\frac{1}{n^{4}}\sum\limits_{i_1,...,i_{4}}\int (I\{\widehat{\lambda}_{n}|X_{i_1}-X_{i_2}|<t\}-I\{\widehat{\lambda}_{n}X_{i_1}<t\})(I\{\widehat{\lambda}_{n}|X_{i_3}-X_{i_4}|<t\}-I\{\widehat{\lambda}_{n}X_{i_3}<t\})e^{-t}dt,
\end{align*}
where $\widehat{\lambda}_{n}=\bar{X}^{-1}$.

According to the argument above, it is enough to obtain the asymptotic distribution of $n\widetilde{W}^{\textup{PR}}_n(\lambda)$. The symmetrized kernel of $\widetilde{W}^{\textup{PR}}_n(\lambda)$ is
\begin{align*}
   \Phi^{\textup{PR}}_n(x_1,x_2,x_3,x_4;\lambda)&=\frac{1}{4!}\sum\limits_{\pi\in\Pi(4)}  \bigg(e^{-\lambda\max (\left| x_{\pi(1)}-x_{\pi(2)}\right| ,\left| x_{\pi(3)}-x_{\pi(4)}\right| )}- e^{-\lambda \max (x_{\pi(3)},\left| x_{\pi(1)}-x_{\pi(2)}\right| )}- e^{-\lambda\max (x_{\pi(1)},\left| x_{\pi(3)}-x_{\pi(4)}\right| )}\\&+e^{-\lambda\max (x_{\pi(1)},x_{\pi(3)})}\bigg).
\end{align*}
 Since $\widetilde{W}^{\textup{PR}}_n(\widehat{\lambda}_n)$ is scale-free, we may assume that $\lambda=1$. The second projection is then equal to
\begin{equation*}
     \begin{split}
      \varphi^\textup{PR}_2(s,t)&=\frac{1}{18}+\frac{1}{2}(e^{-2s-t}+e^{-s-2t})-\frac{1}{4}(e^{-2t}+e^{-2s})-\frac{16}{9}e^{-s-t}+\frac{1}{9}e^{-\min(s,t)}(2-3\min(s,t))\\&+\frac{1}{18}e^{-\max(s,t)}(19-6\min(s,t)).
     \end{split}
\end{equation*}

Conditions \ref{uslov1} and \ref{uslov2} obviously hold and Condition \ref{uslov3} follows from the fact that the kernel is a linear combination of indicators.

Hence, the asymptotic distribution follows from \eqref{raspodelaPoznato} with the corresponding integral operator being 
\begin{equation*}
    A^{\textup{PR}}q(x)=\int_{R}\varphi^{\textup{PR}}_2(x,y)q(y)e^{-y}dy.
\end{equation*}
The eigenvalues of $A^{\textup{PR}}$ cannot be obtained analytically, however they can be approximated numerically using the method from \cite{bozin2020new}.

\subsubsection*{Example 2}
The second example comes from testing hypothesis within the location family. In testing goodness-of-fit  based on equidistribution characterizations, it is often the case that estimating a location parameter, unlike the scale one,  changes the asymptotic distribution (see e.g. \cite{milosevic2019comparison}).

Let $X_1,\ldots,X_n$  be a random sample from distribution $F$. Consider testing the composite null normality hypothesis 
$F(x)=\Phi(\frac{x-\theta}{\sigma})$, where both $\theta$ and $\sigma$ are unknown, based on the famous Polya's characterization
\cite{polya1923herleitung}, arguably the first ever published equidistribution-type characterization. It states that if $X_1$ and $X_2$ are i.i.d. random variables with distribution function $F$, then the equidistribution 
\begin{align}\label{poljaKara}
    \frac{X_1+X_2}{\sqrt{2}}\overset{d}{=}X_1
\end{align}
implies that $F$ is normal with zero mean and arbitrary variance.
Some normality tests based on this characterization can be found in \cite{muliere2002scale} and \cite{litvinova2006two}. 

Let $F_n(t)$ be the usual empirical distribution function and let
\begin{align*}
 G_n(t)=\frac1{n^2}\sum_{i,j=1}^n {\rm I}\Big\{\frac{X_i+X_j}{\sqrt{2}}\leq t\Big\}
\end{align*}
be the V-empirical distribution function associated with the random variable in \eqref{poljaKara}.

Here we consider an $\omega^2$-type test
statistic
\begin{align}
 T_n(\widehat{\theta})=\int_{-\infty}^\infty (\widetilde{G}_n(t)-\widetilde{F}_n(t))^2 d\widetilde{F}_n(t),
\end{align}
where 
$\widetilde{F}_n(t)$ and $\widetilde{G}_n(t)$ are the aforementioned empirical d.f.'s applied to the shifted sample $X^*_i=X_i-\widehat{\theta}$, and $\widehat{\theta}=\bar{X}$,
the sample mean. It is easy to see that the statistic is location and scale free. After transformation we obtain
\begin{align*}
    T_n(\widehat{\theta})=
    \int_{-\infty}^\infty(G_n(s-\widehat{\theta}+\widehat{\theta}\sqrt{2})-F_n(s))^2dF_n(s).
\end{align*}

However, in order to apply Theorem \ref{raspodela} we need to get rid of the empirical measure.
Define
\begin{align*}
    \overline{T}_n(\widehat{\theta})=
    \int_{-\infty}^\infty(G_n(s-\widehat{\theta}+\widehat{\theta}\sqrt{2})-F_n(s))^2dF(s),
\end{align*}
where $F(s)=\Phi(\frac{s-\theta}{\sigma})$, where $\theta$ and $\sigma$ are true parameter values. Since the test statistic is location-scale invariant, we assume $\theta=0$ and $\sigma=1$.


Put $W_n(s)=(G_n(s-\widehat{\theta}+\widehat{\theta}\sqrt{2})-F_n(s))$. Then, using the law of large numbers for 
V-statistics, the fact that $\sqrt{n}W_n(s)$ converges to a centered Gaussian
process, continuous mapping theorem, and Donsker theorem, we get
\begin{align*}
 nT_n(\widehat{\theta})-n\overline{T}_n(\widehat{\theta})=
 \int\frac{1}{\sqrt{n}} nW^2_n(s)d(\sqrt{n}(F_n(s)-F(s))\stackrel{d}{\to} \int 0 d\mathbb{G}_F=0,
\end{align*}
where $\mathbb{G}_F=B\circ F$ and  $\{B(s)\}$ the standard Brownian bridge. Hence the statistics are asymptotically equivalent.


The statistic  $\bar{T}_n(\widehat{\theta})$ is a V-statistic with kernel of the form \eqref{jezgro} 
\begin{align*}
        \Psi(x_{1},...,x_{{4}};\widehat{\lambda}_n)=\frac{1}{4!}\sum\limits_{\pi\in\Pi(4)}\int\limits_{-\infty}^{+\infty} 
        g(x_{\pi(1)},x_{\pi(2)},t;\widehat{\theta}_n)g(x_{\pi(3)},x_{\pi(4)},t;\widehat{\theta}_n)dM(t),
\end{align*}
where
\begin{align*}
    g(x_1,x_2,s;\gamma)&=I\bigg\{\frac{x_1+x_2}{\sqrt{2}}\leq s+\gamma(\sqrt{2}-1)\bigg\}-\frac{1}{2}I\{x_1\leq s\}-\frac{1}{2}I\{x_2\leq s\}.
\end{align*}
We now show that the conditions for applying Theorem \ref{raspodela} are fulfilled. Taking into account that $\theta=0$,
\begin{align*}
    \mu(s;\theta)=E_{\theta}( g(x_1,x_2,s;\gamma))|_{\gamma=\theta}=(\Phi(s+\gamma(\sqrt{2}-1))-\Phi(s))|_{\gamma=\theta}=0.
\end{align*}
In addition $d_1\mu(s;\gamma)=\phi(s+\gamma(\sqrt{2}-1))\cdot(\sqrt{2}-1),$
where $\phi(x)$ is the standard normal density.
Therefore,
\begin{equation*}
\begin{split}
&\int\limits_{-\infty}^{+\infty}(\phi(s)\cdot(\sqrt{2}-1))^2\phi(s)ds<\infty, \\
    &\frac{1}{\gamma^2}\int\limits_{-\infty}^{+\infty}(\Phi(s+\gamma(\sqrt{2}-1))-\Phi(s)-\phi(s)\cdot(\sqrt{2}-1)\cdot\gamma)^2\phi(s)ds
    ={\gamma^2}\frac{(\sqrt{2}-1)^4}{4}\int\limits_{-\infty}^{+\infty}\left(\phi'(s+\xi(s))\right)^2\phi(s)ds<\varepsilon,
    \end{split}
\end{equation*}
where the last inequality follows from the boundness of the function $\phi'(s)$. Hence, Condition \ref{uslov1} is satisfied.
Condition \ref{uslov2} holds obviously for the sample mean.
Condition \ref{uslov3} is straightforward to verify using the properties of the normal density and the finiteness of the first and second moment of the kernel $\Psi_*$.
The  second projection is equal to
\begin{align*}
    \psi^*_2(x_1,x_{3};\theta)
    &=
   \frac{2}{3}\int\limits_{-\infty}^{+\infty}\bigg(\Phi(\sqrt{2}s-x_1)-\frac{1}{2}\Phi(s)-\frac{1}{2}I\{x_1<s\}+\phi(s)\cdot(\sqrt{2}-1)\frac{x_{1}}{m}\bigg)\\&\times
   \bigg(\Phi(\sqrt{2}s-x_3)-\frac{1}{2}\Phi(s)-\frac{1}{2}I\{x_3<s\}+\phi(s)\cdot(\sqrt{2}-1)\frac{x_{3}}{m}\bigg) \phi(s)ds.
\end{align*}
The expression above can be calculated and expressed as a function of bivariate normal distributions using the formulae from \cite{owen1980table}.
The asymptotic distribution of $nT_n(\widehat{\theta})$ now follows from Theorem \ref{raspodela} and the corresponding eigenvalues can be obtained numerically using the method from \cite{bozin2020new}.

 \section*{Acknowledgement}
The authors express their deep gratitude to two anonymous referees whose suggestions improved the quality of the paper.

\bibliographystyle{plain}
\bibliography{literatura.bib}

\end{document}